\newtheorem{theor}{Theorem}[section] 
\theoremstyle{definition} \newtheorem{defin}{Definition}[section]
\newtheorem{ex}{Example}[section]
\theoremstyle{remark} \newtheorem{rem}{Remark}[section]
\newcommand{\pn}{\par\noindent} \newcommand{\pmn}{\par\medskip\noindent}
\begin{document}
\title{Even and odd plane labelled bipartite trees}
\author{Yury Kochetkov}
\date{}
\begin{abstract} Let $T(n,m)$ be the set of plane labelled bipartite trees
with $n$ white vertices and $m$ --- black. If the number $m+n$ of
vertices is even, then the set $T(n,m)$ is a union of two
disjoined subsets --- subset of "even" trees and subset of "odd"
trees. This partition has a clear geometric meaning.
\end{abstract} \email{yukochetkov@hse.ru, yuyukochetkov@gmail.com}
\maketitle

\section{Introduction}
\pn A \emph{plane tree} is a tree embedded into plane. A
\emph{bipartite tree} is a tree with vertices colored in two
colors black and white in such way that adjacent vertices have
different colors. The \emph{passport} of a bipartite tree is the
non increasing sequence of degrees of its white vertices and the
non increasing sequence of degrees of its black vertices.

\begin{ex}\pn
The tree \parbox{3cm}{\begin{picture}(85,40)
\put(15,5){\circle*{3}} \put(15,35){\circle*{3}}
\put(30,20){\circle{4}} \put(50,20){\circle*{3}}
\put(70,20){\circle{4}} \put(15,5){\line(1,1){14}}
\put(15,35){\line(1,-1){14}} \put(32,20){\line(1,0){36}}
\end{picture}} has passport $\langle 3,1\,|\,2,1,1\rangle$.
\end{ex} \pmn The \emph{dual passport} of a bipartite tree is an
expression of the form $\bigl(1^{i_1} 2^{i_2}\ldots\,|\, 1^{j_1}
2^{j_2}\ldots\bigr)$, where $i_1,i_2,\ldots$ are numbers of white
vertices of degree 1, 2 and so on, and $j_1,j_2,\ldots$ are
numbers of black vertices of degree 1, 2 and so on. The dual
passport of the tree in the above example is $\bigl(1^1 3^1\,|\,
1^2 2^1\bigr)$. \pmn A labelled graph is a graph, where each
vertex has a label and these labels are pairwise distinct. We will
consider plane labelled bipartite trees. Here the set of labels of
white vertices and and the set of labels of black ones are
disjoint. We will label white vertices as $v_1,v_2,\ldots$ and
black as $u_1,u_2,\ldots$. \pmn The first problem is about
enumeration: what is the number of plane labelled bipartite trees
with $n$ white vertices and $m$ black ones? \pmn \textbf{Theorem
2.1.} \emph{The number of plane labelled bipartite trees with $n$
white vertices and $m$ black ones is }
$$\dfrac{\bigl((m+n-2)!\bigr)^2}{(n-1)!\cdot (m-1)!}\,.$$

\begin{rem} In what follows we will use notation $(n,m)$-tree to
denote a bipartite trees with $n$ white vertices and $m$ black
ones. \end{rem} \pmn If the number of vertices is even, then the
set of all plane labelled bipartite trees is the union of two
disjoint subsets: the subset of "even" trees and the subset of
"odd" ones. We correspond to a plane bipartite labelled tree $T$
its \emph{invariant} $i(T)=0,1$ (see Definition 3.1). A tree $T$
is even, if $i(T)=0$, and odd in the opposite case. \pmn An
\emph{elementary movement} is a three-step procedure
\begin{enumerate}
    \item we disengage a leaf, i.e. a vertex of degree one with the outgoing
    edge, from the adjacent vertex of the opposite color;
    \item then we move this leaf along the tree till the next
    meeting of a vertex of the opposite color (it can be the same
    vertex, to which the leaf was attached before the move);
    \item we attach the leaf to this vertex.
\end{enumerate}
\pmn \textbf{Theorem 2.1.} \emph{An elementary movement changes
the parity of a tree}.

\begin{rem} All this can be considered as a generalization of
even/odd partition of the set of permutations. \end{rem}

\section{Enumeration}
\pn If $T$ is a plane $(n,m)$-tree, then it generates
$$\dfrac{n!\cdot m!}{\#\text{Aut}(T)} \eqno(1)$$ labelled trees, where
$\#\text{Aut}(T)$ is the order of group of automorphisms of $T$.
Let $M$ be a set of all plane $(n,m)$-trees with a fixed passport
$\Pi=\langle k_1,k_2,\ldots\,|\,l_1,l_2,\ldots\rangle$ and let
$\widetilde{\Pi}=\bigl(1^{i_1} 2^{i_2}\ldots\,|\, 1^{j_1}
2^{j_2}\ldots\bigr)$ be the dual passport. The Goulden-Jackson
theorem \cite{GJ} states that
$$\sum_{T\in M}\frac{1}{\#{\rm Aut}(T)}=\dfrac{(n-1)!\cdot (m-1)!}
{i_1!\cdot\ldots\cdot i_s!\cdot j_1!\cdot\ldots\cdot j_s!}\,,$$
where $s=n+m-1$ is the number of edges.

\begin{theor} The number of plane labelled $(n,m)$-trees is
$$\dfrac{\bigl((n+m-2)!\bigr)^2}{(n-1)!\cdot (m-1)!}\,.$$
\end{theor}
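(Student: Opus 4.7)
The plan is to partition all plane $(n,m)$-trees by passport and combine formula $(1)$ with the Goulden--Jackson formula. Since each plane $(n,m)$-tree $T$ gives rise to $n!\,m!/\#\mathrm{Aut}(T)$ labelled trees, the desired count equals
$$N(n,m)=n!\,m!\sum_{T}\frac{1}{\#\mathrm{Aut}(T)},$$
where the sum runs over all plane $(n,m)$-trees. Splitting the sum by passport $\Pi$ and applying Goulden--Jackson gives
$$N(n,m)=n!\,m!\,(n-1)!\,(m-1)!\sum_{\widetilde{\Pi}}\frac{1}{i_1!\cdots i_s!\,j_1!\cdots j_s!},$$
where $\widetilde{\Pi}=\bigl(1^{i_1}2^{i_2}\ldots\,|\,1^{j_1}2^{j_2}\ldots\bigr)$ runs over admissible dual passports.

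Next I would observe that the conditions on $(i_k)$ and $(j_k)$ are independent: the white part must satisfy $\sum_k i_k=n$ and $\sum_k k\,i_k=n+m-1$ (the total number of edges), and similarly for the black part with $n$ replaced by $m$. Hence the sum factors as a product of two sums of the same shape, so it suffices to evaluate, for positive integers $p,q$ with $p\le q$,
$$S(p,q)=\sum_{(i_k)}\frac{1}{i_1!\,i_2!\cdots},$$
the sum ranging over sequences of nonnegative integers with $\sum i_k=p$ and $\sum k\,i_k=q$.

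The key combinatorial identity is that $p!\cdot S(p,q)$ counts compositions of $q$ into exactly $p$ positive parts: the multinomial coefficient $p!/(i_1!\,i_2!\cdots)$ records the number of arrangements of $p$ parts of which $i_k$ equal $k$, and summing over all $(i_k)$ with the two constraints enumerates all such compositions. Since the number of compositions of $q$ into $p$ positive parts is $\binom{q-1}{p-1}$, we get $S(p,q)=\binom{q-1}{p-1}/p!$. Applying this with $(p,q)=(n,n+m-1)$ and $(p,q)=(m,n+m-1)$ yields
$$S(n,n+m-1)\,S(m,n+m-1)=\frac{1}{n!\,m!}\cdot\frac{\bigl((n+m-2)!\bigr)^2}{\bigl((n-1)!\,(m-1)!\bigr)^2}.$$
Multiplying back by $n!\,m!\,(n-1)!\,(m-1)!$ gives the stated formula.

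The only real obstacle is recognizing the multinomial-to-compositions identity; everything else is bookkeeping. I would make sure to verify the independence of the two constraint systems (both white and black degrees must sum to $n+m-1$, the number of edges, so the bipartite structure does not couple the two sums), since that independence is what lets the double sum factor cleanly.
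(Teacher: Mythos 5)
Your proposal is correct and follows essentially the same route as the paper: formula $(1)$ plus Goulden--Jackson per passport, factoring the sum over dual passports into independent white and black parts, and evaluating each via the multinomial-to-compositions identity $\sum n!/(i_1!\,i_2!\cdots)=\binom{s-1}{n-1}$. Your write-up is in fact a cleaner version of the paper's argument, since you state the factorization and the identity $S(p,q)=\binom{q-1}{p-1}/p!$ explicitly where the paper leaves them implicit.
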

\pmn \emph{Proof.} According to (1) and the Goulden-Jackson
theorem, the number of plane labelled $(n,m)$-trees with the given
passport $\Pi=\langle k_1,k_2,\ldots\,|\,l_1,l_2,\ldots\rangle$ is
$$n!\cdot m!\cdot \dfrac {(n-1)!\cdot (m-1)!}
{i_1!\cdot\ldots\cdot i_s!\cdot j_1!\cdot\ldots\cdot
j_s!}\,.\eqno(2)$$ Numbers $k_1,k_2,\ldots$ constitute a partition
of $s=n+m-1$ of the length exactly $n$ and numbers
$l_1,l_2,\ldots$ constitute a partition of $s$ of the length
exactly $m$. Thus, we must sum (2) by this partitions, or sum
$$n!\cdot \dfrac{(n-1)!}{i_1!\cdot\ldots\cdot i_s!}$$ by
partitions of the length $n$, sum
$$m!\cdot\dfrac{(m-1)!}{j_1!\cdot\ldots\cdot
j_s!}$$ by partitions of the length $m$, and multiply. \pmn Each
partition $(k_1,k_2,\ldots,k_n)=1^{i_1} 2^{i_2}\ldots$ of $s$ of
the length exactly $n$ generates
$$\dfrac{n!}{(i_1)!\cdot (i_2)!\cdot \ldots}$$ solutions of the
equation $x_1+\ldots+x_n=s$, where each solution is a permutation
of numbers $k_1,k_2,\ldots,k_n$. But the number of all positive
integral solutions of this equation is $\binom{s-1}{n-1}$, thus
the double sum by all partitions of lengths $n$ and $m$ is
$$n!\cdot m!\cdot \binom{s-1}{n-1}\cdot
\binom{s-1}{m-1}\cdot \frac{1}{n\cdot
m}=\dfrac{\bigl((n+m-2)!\bigr)^2}{(n-1)!\cdot (m-1)!}\,.\quad
\qed$$

\section{Invariant}
\pn Let $v_1,\ldots,v_n$ be labels of white vertices of a plane
labelled $(n,m)$-tree $T$ and $u_1,\ldots,u_m$ be labels of black
vertices. Some white vertex $v_i$ will be the root vertex and some
edge $e$, outgoing from $v_i$, will be the root edge. We start the
counterclockwise going around of $T$, beginning from $v_i$,
keeping $e$ to the left and in the process of this going we
generate the string $c(T)$ of labels and closing brackets: when we
meet some vertex for the first time we write its label and when we
meet it for the last time we write ")". In $c(T)$ the number of
labels is equal to the number of brackets and in each left segment
the number of labels is not less than the number of brackets.
Thus, in the string $c(T)$ we have a unique correspondence between
labels and brackets.

\begin{ex} Let $v_1$ be the root vertex of the tree $T$
\[\begin{picture}(160,70) \put(0,15){\circle{4}}
\put(40,15){\circle*{3}} \put(40,55){\circle{4}}
\put(80,15){\circle{4}} \put(120,15){\circle*{3}}
\put(120,55){\circle{4}} \put(160,15){\circle{4}}
\put(2,15){\line(1,0){76}} \put(40,15){\line(0,1){38}}
\put(82,15){\line(1,0){76}} \put(120,15){\line(0,1){38}}
\put(-3,5){\small $v_3$} \put(37,5){\small $u_2$}
\put(78,5){\small $v_4$} \put(118,5){\small $u_1$}
\put(158,5){\small $v_5$} \put(38,60){\small $v_1$}
\put(118,60){\small $v_2$} \put(43,35){\small $e$}
\end{picture}\] and $e$ be its root edge. Then
$c(T)=v_1u_2v_3)v_4u_1v_5)v_2)))))$.
\end{ex}

\begin{ex} Let $c(T)=v_1u_2v_2u_1))v_3)v_4u_3))))$. Then
\[\begin{picture}(180,110) \put(20,55){\circle*{3}}
\put(60,55){\circle{4}} \put(100,55){\circle*{3}}
\put(100,15){\circle{4}} \put(100,95){\circle{4}}
\put(140,55){\circle{4}} \put(180,55){\circle*{3}}
\put(20,55){\line(1,0){38}} \put(62,55){\line(1,0){76}}
\put(142,55){\line(1,0){38}} \put(100,17){\line(0,1){76}}
\put(18,45){\small $u_1$} \put(58,45){\small $v_2$}
\put(103,45){\small $u_2$} \put(98,5){\small $v_3$}
\put(98,100){\small $v_1$} \put(138,45){\small $v_4$}
\put(178,45){\small $u_3$} \put(103,75){\small $e$} \put(0,53){T:}
\end{picture}\]
\end{ex}

\begin{rem} From here we will assume that a tree has an even
number of vertices. \end{rem}

\begin{defin} Let $T$ be a plane labelled $(n,m)$-tree with the root vertex
$v_i$ and the root edge $e$ and let $c(T)$ be the corresponding
string. Also let
\begin{itemize}
    \item $a$ be the number of inversions in vertices $v$, i.e.
    the number of cases, when $v_k$ is before $v_l$ in $c(T)$, but $k>l$;
    \item $b$ be the analogously defined number of inversions in
    vertices $u$;
    \item $c$ be the number of cases, when some $u$ is before some $v$
    in $c(T)$;
    \item $d$ be the number of cases, when a closing bracket is before
    some label;
    \item $e=\frac{|n-m|}{2}$.
\end{itemize} Let $i(T)\equiv \bigl(a+b+c+\frac{d+e}{2}\bigr)\text{ mod } 2$.
A tree $T$ will be called \emph{even}, if $i(T)=0$, and \emph{odd}
in the opposite case.
\end{defin}

\begin{theor} The invariant does not depend on a choice of root
edge. \end{theor}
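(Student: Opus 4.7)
The plan is to show invariance under a single cyclic shift of the root edge at $v_i$; the general statement then follows by iteration. Let $e_1,\ldots,e_k$ be the edges incident to $v_i$ in counterclockwise order, and let $S_p$ denote the DFS substring generated by the subtree of $T$ that hangs from $v_i$ through $e_p$. With root edge $e_1$ we have $c(T)=v_i\,S_1S_2\cdots S_k\,)$, and with root edge $e_2$ we have $c'(T)=v_i\,S_2\cdots S_k\,S_1\,)$; the internal structure of each $S_p$ is unaffected, since once the DFS enters $S_p$ through $e_p$ it proceeds identically. It therefore suffices to compute, for each of $a,b,c,d,e$, the parity of its change when the block $S_1$ is moved past the block $S_{\geq 2}:=S_2\cdots S_k$.

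Write $w_p,\beta_p$ for the numbers of white and black labels inside $S_p$, set $\sigma_p=w_p+\beta_p$, and let $w_{\geq 2},\beta_{\geq 2},\sigma_{\geq 2}$ denote the analogous totals over $p\geq 2$. The only label pairs whose relative order changes are the crosses between $S_1$ and $S_{\geq 2}$; each white--white cross flips its inversion status, giving $\Delta a\equiv w_1w_{\geq 2}\pmod 2$, and analogously $\Delta b\equiv \beta_1\beta_{\geq 2}\pmod 2$. For $c$, before the shift the crosses with $u$ in $S_1$ and $v$ in $S_{\geq 2}$ contribute $\beta_1 w_{\geq 2}$, while after the shift the contribution is $\beta_{\geq 2}w_1$, so $\Delta c\equiv \beta_1w_{\geq 2}+\beta_{\geq 2}w_1\pmod 2$. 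Clearly $\Delta e=0$. The crucial observation is that $d$ counts precisely the unordered pairs of vertices that are \emph{incomparable} in the rooted tree (neither is an ancestor of the other): a closing bracket for $w$ precedes a label for $w'$ in $c(T)$ iff the DFS has fully left $w$'s subtree before entering $w'$'s, which happens exactly when the two subtrees are disjoint. This count depends on the rooted tree alone, not on the sibling order at $v_i$, so $\Delta d=0$.

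Combining the four contributions, $\Delta i(T)\equiv\Delta(a+b+c)\equiv w_1w_{\geq 2}+\beta_1\beta_{\geq 2}+\beta_1w_{\geq 2}+\beta_{\geq 2}w_1=(w_1+\beta_1)(w_{\geq 2}+\beta_{\geq 2})=\sigma_1\sigma_{\geq 2}\pmod 2$. Under the standing assumption that $n+m$ is even, $\sigma_1+\sigma_{\geq 2}=n+m-1$ is odd, so exactly one of $\sigma_1,\sigma_{\geq 2}$ is even and the product is even. Hence $\Delta i(T)\equiv 0$ for a single cyclic shift, and iterating covers every other choice of root edge. The main obstacle I anticipate is the identification of $d$ as a purely structural invariant of the rooted tree (independent of the sibling order); once that is clear, the remainder collapses through the factorization $(w_1+\beta_1)(w_{\geq 2}+\beta_{\geq 2})$, whose evenness is forced by the global parity condition $n+m\equiv 0\pmod 2$.
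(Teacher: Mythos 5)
Your proof is correct and takes essentially the same route as the paper: a single cyclic shift of the root edge with iteration, cross-block inversion counts for $a$, $b$, $c$, and the same final factorization --- your $(w_1+\beta_1)(w_{\geq 2}+\beta_{\geq 2})=\sigma_1\sigma_{\geq 2}\equiv 0\pmod 2$ is exactly the paper's $(k_A+l_A)(k_B+l_B)=z(n+m-1-z)\equiv 0\pmod 2$, both forced by $n+m$ being even. Your one genuine addition is the proof that $\Delta d=0$ via identifying $d$ with the number of incomparable vertex pairs in the rooted tree, a fact the paper merely asserts (``does not change the number of inversions in labels and brackets''), so your write-up is if anything slightly more complete on that point.
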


\begin{proof} Let $k$ be the degree of root vertex $v_i$.
\[\begin{picture}(190,80) \put(0,5){\line(1,0){80}}
\put(0,5){\line(0,1){25}} \put(25,15){block A}
\put(0,30){\line(1,0){80}} \put(80,5){\line(0,1){25}}
\put(40,60){\circle{4}} \put(37,65){\small $v_i$}
\put(39,59){\vector(-1,-1){29}} \put(13,40){\small $e$}
\put(41,59){\line(1,-1){29}} \put(65,40){\small $e_{k-1}$}
\multiput(35,40)(5,0){3}{\circle*{2}} \put(110,5){\line(1,0){80}}
\put(110,5){\line(0,1){25}} \put(110,30){\line(1,0){80}}
\put(190,5){\line(0,1){25}} \put(135,15){block B}
\qbezier(42,60)(100,60)(150,30) \put(140,40){\small $e_k$}
\end{picture}\]
$$\Downarrow$$
\[\begin{picture}(190,75) \put(0,5){\line(1,0){80}}
\put(0,5){\line(0,1){25}} \put(25,15){block B}
\put(0,30){\line(1,0){80}} \put(80,5){\line(0,1){25}}
\put(40,60){\circle{4}} \put(37,65){\small $v_i$}
\put(40,60){\vector(0,-1){28}} \put(30,40){\small $e_k$}
\put(110,5){\line(1,0){80}} \put(110,5){\line(0,1){25}}
\put(110,30){\line(1,0){80}} \put(190,5){\line(0,1){25}}
\put(135,15){block A} \put(41,59){\line(3,-1){90}}
\qbezier(42,60)(150,60)(170,30) \put(82,37){\small $e$}
\put(165,40){\small $e_{k-1}$}
\multiput(135,35)(5,0){3}{\circle*{2}}
\end{picture}\] We will study the change of invariant induced by
the change of a root edge, demonstrated in the figure above. \pmn
Assume that there are
\begin{itemize}
    \item $k_A$ "white" labels in block A and $k_B$ "white" labels
    in block B, $k_A+k_B=n-1$;
    \item $l_A$ "black" labels in block A and $l_B$ "black" labels
    in block B, $l_A+l_B=m$;
    \item $x$ inversions in "white" labels between blocks A and
    B;
    \item $y$ inversions in "black" labels between blocks A and
    B.
\end{itemize} The change of root edge
\begin{itemize}
    \item  decreases the number of inversions in
    white labels by $x$, but increases it by $k_Ak_B-x$;
    \item decreases the number of inversions in
    black labels by $y$, but increases it by $l_Al_B-y$;
    \item decreases the number of inversions in
    white and black labels by $l_Ak_B$, but increases it by $l_Bk_A$;
    \item does not change the number of inversions in labels and
    brackets.
\end{itemize} Thus, we must find the parity of the number
$k_Ak_B+l_Al_B-l_Ak_B+l_Bk_A$. Let $z=k_A+l_A$, then
\begin{multline*}k_Ak_B+l_Al_B-l_Ak_B+l_Bk_A\equiv
k_Ak_B+l_Al_B+l_Ak_B+l_Bk_A=\\=
(k_A+l_A)(k_B+l_B)=z(n+m-1-z)=\\=(m+n)z-z(z+1)\equiv 0\text{ mod }
2\end{multline*}
\end{proof}

\begin{theor} The invariant does not depend on a choice of root
vertex. \end{theor}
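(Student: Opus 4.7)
The plan is to combine Theorem 3.1 (freedom in the choice of root edge) with an \emph{elementary} change of root vertex. I will call a change from root $v_i$ to root $v_j$ elementary when $v_i$ and $v_j$ share a common black neighbour $u$, with root edges chosen as $(v_i,u)$ and $(v_j,u)$ in the two configurations. On any bipartite tree the graph on white vertices whose edges are ``share a common black neighbour'' is connected --- the unique path in $T$ between two white vertices alternates colour, giving a chain of such links --- so iterated elementary moves relate any two root choices, with Theorem 3.1 applied in between to realise whatever root edge is convenient. It therefore suffices to check that $i(T)\bmod 2$ is preserved under one elementary move.

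For this I would list the white neighbours of $u$ in counter-clockwise order at $u$ as $w_0=v_i,w_1=v_j,w_2,\ldots,w_{k-1}$, and write $T_\ell$ for the component of $T\setminus\{(u,w_\ell)\}$ containing $w_\ell$. Each $T_\ell$ is entered through the edge $(u,w_\ell)$ in both traversals, so it contributes the same substring $[T_\ell]$ in both. The old string reads
\[
v_i\,u\,[T_1]\,[T_2]\cdots[T_{k-1}]\,)\,Z_0\,),
\]
where $Z_0$ is the traversal of the remaining subtrees at $v_i$ after the return from $u$, and the new one reads
\[
v_j\,u\,[T_2]\cdots[T_{k-1}]\,[T_0]\,)\,Z_1\,).
\]
Since the blocks $[T_2],\ldots,[T_{k-1}]$ occupy corresponding positions in the two strings, the bookkeeping is confined to interactions between $[T_0]$, $[T_1]$, $Z_0$, $Z_1$ and the distinguished symbols $v_i,v_j,u$ together with their closing brackets.

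The quantities $\Delta a,\Delta b,\Delta c,\Delta d$ should then come out as bilinear expressions in the numbers of labels of each colour in the moving blocks, of the same shape as the product $z(n+m-1-z)$ appearing in the proof of Theorem 3.1, while $e$ is unchanged. The invariance we want is $2(\Delta a+\Delta b+\Delta c)+\Delta d\equiv 0\pmod 4$, and the standing hypothesis $n+m\equiv 0\pmod 2$ should deliver this through the same parity identity used there. The main obstacle is $\Delta d$: because $d$ enters the invariant as $(d+e)/2$, what controls the result is the residue of $\Delta d$ modulo $4$ rather than only modulo $2$, and pinning this down requires careful bookkeeping of how the closing brackets of $u$, $v_i$, $v_j$ interleave with the labels and brackets of $Z_0$ and $Z_1$.
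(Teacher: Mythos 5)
Your reduction coincides with the paper's own: the paper also proves only the elementary case in which $v_i$ and $v_j$ share a common black neighbour $u_p$ (your $u$), and your two traversal strings match its transformation $v_iu_pv_j\,C)B)A)\;\Rightarrow\; v_j\,C\,u_p\,B\,v_i\,A)))$ under the dictionary $C=Z_1$, $B=[T_2]\cdots[T_{k-1}]$, $A=Z_0$, up to a change of root edge at $v_j$ that Theorem 3.1 indeed licenses (the paper leaves the connectivity-and-iteration step implicit; making it explicit as you do is harmless and correct). But your proposal stops exactly where the proof has to begin. The entire content of the theorem is the verification that the invariant is unchanged, i.e. the actual computation of $\Delta a$, $\Delta b$, $\Delta c$, $\Delta d$; you do not perform it, you only assert that it ``should come out'' right, and you yourself flag the residue of $\Delta d$ modulo $4$ as an unresolved ``main obstacle.'' A proof attempt that names its own missing step as an obstacle has a genuine gap.

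Moreover, the prediction you make for the deferred computation is wrong in shape, so the appeal to ``the same parity identity'' cannot be taken on faith. Since the blocks $C$, $B$, $A$ occur in the same relative order in both strings, no block passes another block; only the single symbols $v_i$, $u_p$ and two closing brackets move past blocks. Consequently the changes are \emph{linear} in $k_A,l_A,k_B,l_B,k_C,l_C$, not bilinear of the form $z(n+m-1-z)$ as in Theorem 3.1. The paper's bookkeeping records, for each displaced symbol and each bypassed block, a label-inversion term together with a halved bracket term such as $(k_C+l_C)/2$, and sums everything to $\pm 1-k_A-l_A+k_B+l_B+k_C+3l_C\equiv 1+(k_A+l_A+k_B+l_B+k_C+l_C)=n+m-2\equiv 0\pmod 2$, using the standing hypothesis that $n+m$ is even. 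This also dissolves your mod-$4$ worry about $\Delta d$: the paper never needs $\Delta d$ modulo $4$ in isolation, because it tracks the contributions $\Delta d/2$ blockwise as summands of the invariant, so that only the parity of the integer total matters. Carrying out that blockwise computation is precisely what your proposal still owes.
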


\begin{proof} Let the root vertex be changed from $v_i$ to $v_j\,$:
\[\begin{picture}(280,105) \put(0,5){\line(1,0){40}}
\put(0,5){\line(0,1){15}} \put(0,20){\line(1,0){40}}
\put(40,5){\line(0,1){15}} \put(5,10){\small block C}
\put(20,20){\line(0,1){13}} \put(20,35){\circle{4}}
\put(8,35){\small $v_j$} \put(21,36){\line(1,1){58}}
\put(50,65){\circle*{3}} \put(80,95){\circle{4}}
\put(37,65){\small $u_p$} \put(69,95){\small $v_i$}
\put(50,65){\line(1,-1){15}} \put(45,50){\line(1,0){40}}
\put(45,50){\line(0,-1){15}} \put(45,35){\line(1,0){40}}
\put(85,35){\line(0,1){15}} \put(50,40){\small block B}
\put(81,94){\line(1,-1){14}} \put(75,80){\line(1,0){40}}
\put(75,80){\line(0,-1){15}} \put(75,65){\line(1,0){40}}
\put(115,65){\line(0,1){15}} \put(80,70){\small block A}

\put(140,40){$\Rightarrow$}

\put(165,80){\line(1,0){40}} \put(165,80){\line(0,-1){15}}
\put(165,65){\line(1,0){40}} \put(205,65){\line(0,1){15}}
\put(170,70){\small block C} \put(185,80){\line(1,1){14}}
\put(200,95){\circle{4}} \put(188,95){\small $v_j$}
\put(201,94){\line(1,-1){58}} \put(230,65){\circle*{3}}
\put(260,35){\circle{4}} \put(235,65){\small $u_p$}
\put(265,35){\small $v_i$} \put(260,33){\line(0,-1){13}}
\put(240,20){\line(1,0){40}} \put(240,20){\line(0,-1){15}}
\put(240,5){\line(1,0){40}} \put(280,5){\line(0,1){15}}
\put(245,10){\small block A} \put(230,65){\line(-1,-1){15}}
\put(195,50){\line(1,0){40}} \put(195,50){\line(0,-1){15}}
\put(195,35){\line(1,0){40}} \put(235,35){\line(0,1){15}}
\put(200,40){\small block B} \end{picture}\] Then the string is
changed in the following way:
$$v_iu_pv_j\,C)B)A) \Rightarrow v_j\,C\,u_p\,B\,v_i\,A)))$$ As
above we will assume that blocks A, B, C contain $k_A$ "white"
labels and $l_A$ "black" labels, $k_B$ "white" labels and $l_B$
"black" labels, $k_C$ "white" labels and $l_C$ "black" labels,
respectively. Then (if we do not take into account even terms) the
invariant is changed by
\begin{multline*}[\pm 1+1+k_C+l_C+(k_C+l_C)/2+k_B+l_B+(k_B+l_B)/2]+
\\ + [-1-k_C+l_C+(k_C+l_C)/2] +\\+
[-(k_A+l_A)/2-(k_B+l_B)/2-(k_A+l_A)/2].\end{multline*} Here terms
in the first square brackets are generated by movement of $v_i$,
in the second square brackets --- by movement of $u_p$ and in the
third square brackets --- by movement of two closing brackets in
the string $c(T)$. Thus, the change is
$$\pm 1-k_A-l_A+k_B+l_B+k_C+3l_C\equiv n+m-2 \equiv 0\text{ mod }
2.$$ \end{proof}

\section{Movements}
\pn A leaf is a vertex of degree one with the edge outgoing from
it.
\begin{defin} Let $T$ be a plane labelled bipartite tree. A
\emph{movement} is a 3-step procedure: a) we disengage a black
(white) leaf $A$ from white (black) vertex $B$ to which this leaf
is attached; b) we move the leaf $A$ around $T$ clockwise or
counter clockwise to a white (black) vertex $C$ (it is possible,
that $C$ is $B$); c) we attach the leaf $A$ to $C$. A movement of
a black (white) leaf is \emph{even}, if it bypassed an even number
of black (white) vertices, and odd in the opposite case. A
movement of a black (white) leaf will be called \emph{elementary},
if it bypassed one black (white) vertex.
\end{defin}

\begin{ex} In the figure below we see the movement of black leaf
"$\alpha$" from white vertex "a" to white vertex "b".
\[\begin{picture}(330,70) \put(20,15){\circle{4}}
\put(20,55){\circle*{3}} \put(70,15){\circle*{3}}
\put(70,55){\circle{4}} \put(120,15){\circle{4}}
\put(0,15){\line(1,0){18}} \put(22,15){\line(1,0){96}}
\put(122,15){\line(1,0){18}} \put(20,17){\line(0,1){38}}
\put(70,15){\line(0,1){38}} \put(18,5){\small a}
\put(118,5){\small b} \put(69,59){\scriptsize c} \put(68,5){\small
$\beta$} \put(18,58){\scriptsize $\alpha$}
\qbezier[30](25,20)(45,20)(65,20) \put(37,20){$\longrightarrow$}
\qbezier[30](65,20)(65,45)(65,65) \put(61,30){$\uparrow$}
\put(61,50){$\uparrow$} \qbezier[8](65,65)(70,65)(75,65)
\put(66,65){$\to$} \qbezier[30](75,65)(75,43)(75,20)
\put(76,50){$\downarrow$} \put(76,30){$\downarrow$}
\qbezier[30](75,20)(95,20)(115,20) \put(87,20){$\longrightarrow$}

\put(150,13){$\Rightarrow$}

\put(210,15){\circle{4}} \put(310,55){\circle*{3}}
\put(260,15){\circle*{3}} \put(260,55){\circle{4}}
\put(310,15){\circle{4}} \put(190,15){\line(1,0){18}}
\put(212,15){\line(1,0){96}} \put(312,15){\line(1,0){18}}
\put(310,17){\line(0,1){38}} \put(260,15){\line(0,1){38}}
\put(208,5){\small a} \put(308,5){\small b}
\put(259,59){\scriptsize c} \put(258,5){\small $\beta$}
\put(308,58){\scriptsize $\alpha$}
\end{picture}\] "$\alpha$" bypasses black vertex "$\beta$" twice, so
this movement is even. \end{ex}

\begin{rem} A reason for the number of vertices to be even is that
otherwise a movement of a leaf clockwise and counterclockwise to
the same final position is even in one case and odd --- in
another. \end{rem}

\begin{theor} An elementary movement changes the parity of a tree.
\end{theor}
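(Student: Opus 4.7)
By Theorems 3.1 and 3.2 the invariant $i(T)$ does not depend on the chosen root, so I am free to compute $i(T)$ and $i(T')$ using different convenient roots for $T$ and $T'$. It suffices to treat the case of a black leaf $\alpha$ attached to a white vertex $B$ and bypassing one black vertex $\beta$ adjacent to $B$ in the counterclockwise cyclic order; the white-leaf case is entirely symmetric (swap the roles of $a$ and $b$, and adjust the sign of the change in $c$), and the clockwise variant is the inverse of a counterclockwise elementary movement, so it has the same parity effect.

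I would root $T$ at $B$ with root edge the edge to $\alpha$, and root $T'$ at $B$ with root edge the edge to $\beta$. Writing $X$ for the first white vertex encountered inside $\beta$'s subtree (with $X=B$ in the degenerate case when $\beta$ is itself a leaf), and letting $R_X$, $R_\beta$, $S$ stand for the remainder of $X$'s subtree, the remainder of $\beta$'s subtree, and the remaining children of $B$ respectively, the two traversals produce
$$c(T)=B\,\alpha)\,\beta\,X\,R_X)\,R_\beta)\,S)\qquad\text{and}\qquad c(T')=B\,\beta\,X\,\alpha)\,R_X)\,R_\beta)\,S).$$
The entire effect of the movement on $c(T)$ is therefore that the two-character segment $\alpha)$ has shifted rightwards past $\beta\,X$; in the degenerate case the change collapses to the simple transposition $\alpha)\,\beta)\leftrightarrow\beta)\,\alpha)$.

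Reading off the four ingredients of the invariant from this local change is now essentially bookkeeping. No pair of white labels alters its relative order, so $\Delta a=0$; the only pair of black labels that swaps is $(\alpha,\beta)$, yielding $\Delta b\equiv 1\pmod{2}$; the only pair whose ``$u$-before-$v$'' status changes is $(\alpha,X)$, contributing $\Delta c=-1$ when $X\neq B$ and $\Delta c=0$ otherwise; and the closing bracket matching $\alpha$ loses exactly the two labels $\beta$ and $X$ from its right-hand tail, giving $\Delta d=-2$ (or $\Delta d=0$ in the leaf case). Since $e$ depends only on $n$ and $m$ it is unchanged, and in every sub-case the total $\Delta a+\Delta b+\Delta c+(\Delta d+\Delta e)/2$ is odd, proving the theorem. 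The main obstacle I anticipate is ensuring the root choice really does make the string change this local --- in particular, verifying that the block $S$ appears identically in $c(T)$ and $c(T')$, which is the reason for using different root edges on the two sides --- and handling all the degenerate sub-cases uniformly so that $\Delta d$ is always even and the division by $2$ in the invariant remains meaningful modulo $2$.
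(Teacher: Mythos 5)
Your strategy is genuinely different from the paper's, and the half of it you actually carry out is correct. The paper fixes one root for the whole argument and runs a case analysis over how the movement sits relative to that root (distance to the root increases by $2$, is unchanged in two ways, decreases by $2$, plus a separate computation when the root vertex is itself the moving leaf), tracking each case in $c(T)$. You instead invoke Theorems 3.1 and 3.2 to choose adapted roots on the two sides, collapsing everything to the single local change $B\,\alpha)\,\beta\,X\cdots \Rightarrow B\,\beta\,X\,\alpha)\cdots$. I checked your bookkeeping for the black-leaf case and it is right: in the generic sub-case $\Delta a=0$, $\Delta b\equiv 1$, $\Delta c=-1$, $\Delta d=-2$, so the invariant changes by $1+1+1\equiv 1 \pmod 2$; in the degenerate sub-case ($\beta$ a leaf) only $\Delta b\equiv 1$ survives; and reducing the clockwise variant to the inverse of a counterclockwise movement is legitimate. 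Where it works, this is shorter and cleaner than the paper's figure-by-figure case chase, at the price of using Theorems 3.1 and 3.2 as input (which is fair, since the paper proves them first).

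The genuine gap is the sentence claiming the white-leaf case is ``entirely symmetric.'' It is not, and the asymmetry is created precisely by your root choice. Definition 3.1 requires the root vertex to be \emph{white}, so for a white leaf $\alpha$ detached from a black vertex $B$ you cannot root at $B$; moreover the invariant is color-asymmetric ($c$ counts only black-before-white pairs), so ``swap the roles of $a$ and $b$ and adjust the sign of $c$'' is not an operation you can perform on the definition. (The paper gets away with an analogous wave only because its root is an arbitrary fixed white vertex far from the action, so its local string analyses really are parallel for the two colors; your root is tied to the attachment vertex, which is exactly what breaks the symmetry.) The white case can be rescued inside your framework, but it needs its own computation: root both trees at the bypassed \emph{white} vertex $\beta$ with root edge $\beta B$; then $c(T)=\beta\,B\,S\,\alpha)\,)\,X\,R_X)\cdots$ becomes $c(T')=\beta\,B\,S)\,X\,\alpha)\,R_X)\cdots$, i.e.\ the segment $\alpha)$ crosses the bracket closing $B$ and the label $X$, giving $\Delta a=\Delta b=0$, $\Delta c=+1$, $\Delta d=0$ --- odd, but by a different mechanism than in your black-leaf case. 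Worse, in the degenerate sub-case where $\beta$ is itself a white leaf no white root makes the change local: rooted at $\beta$, the segment $\alpha)$ must cross the entire block $S$, and the total change is $\equiv k+l \pmod 2$, where $k$ and $l$ count white and black labels in $S$; this is odd only because $k+l=n+m-3$ and $n+m$ is assumed even. So the white-leaf case is not a formal mirror of the black-leaf case --- it needs separate bookkeeping and even reuses the global parity hypothesis --- and as written your proof establishes only half of the theorem.
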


\begin{proof} We will check all types of elementary movements.
\begin{itemize}
    \item A counterclockwise movement of a white leaf $v_j$ increases the distance
    between the root vertex $v_i$ and $v_j$ by 2: at first the leaf is attached to
    the black vertex $u_p$, then it moves to the black vertex
    $u_q$, bypassing the white vertex $v_s$.
    \[\begin{picture}(180,70) \put(0,55){\circle{4}}
    \put(-3,60){\small $v_i$} \put(60,55){\circle*{3}}
    \put(60,15){\circle{4}} \put(100,55){\circle{4}}
    \put(140,55){\circle*{3}} \put(140,15){\circle{4}}
    \put(57,5){\small $v_j$}
    \multiput(25,55)(5,0){3}{\circle*{2}}
    \put(60,55){\line(1,0){38}} \put(60,55){\line(0,-1){38}}
    \put(102,55){\line(1,0){38}} \put(140,55){\line(0,-1){38}}
    \put(137,5){\small $v_j$} \put(57,60){\small $u_p$}
    \put(97,60){\small $v_s$} \put(137,60){\small $u_q$}
    \multiput(165,55)(5,0){3}{\circle*{2}}
    \qbezier[50](63,50)(80,50)(137,50)
    \put(90,45){$\longrightarrow$} \end{picture}\] This movement
    changes the string in the following way:
    $$\cdots\,v_j)v_su_q\,\cdots \quad\Rightarrow\quad\cdots\,
    v_su_qv_j)\,\cdots$$ It changes the number of inversions
    in white labels by 1, the number of inversions in white and
    black labels by 1 and the number of inversions in labels and
    brackets by 2, i.e. the invariant changes by $\pm 1+1+1$ ---
    by an odd number.
    \item A counterclockwise movement of a white leaf $v_j$ does
    not change the distance between the root vertex $v_i$ and $v_j$  --- the
    movement at first decreases this distance and then increases it.
    In the beginning the leaf is attached to the black vertex $u_p$, then it moves
    to the black vertex $u_q$, bypassing the white vertex $v_s$.
    \[\begin{picture}(160,80) \put(0,65){\circle{4}}
    \put(-3,70){\small $v_i$} \put(2,65){\line(1,0){15}}
    \put(60,65){\circle{4}} \put(57,70){\small $v_s$}
    \put(58,65){\line(-1,0){15}}
    \multiput(25,65)(5,0){3}{\circle*{2}}
    \put(62,65){\line(1,0){75}} \put(120,65){\circle*{3}}
    \put(117,70){\small $u_q$}
    \multiput(145,65)(5,0){3}{\circle*{2}}
    \put(60,25){\circle*{3}} \put(60,25){\line(0,1){38}}
    \put(50,15){\line(1,1){10}} \put(60,25){\line(2,-1){24}}
    \put(48,28){\small $u_p$} \put(85,12){\circle{4}}
    \put(90,8){\small $v_j$} \put(120,35){\circle{4}}
    \put(120,37){\line(0,1){28}} \put(125,32){\small $v_j$}
    \qbezier[25](64,28)(64,40)(64,61)
    \qbezier[30](64,61)(90,61)(116,61) \put(66,40){$\uparrow$}
    \put(85,56){$\longrightarrow$} \end{picture}\] This movement
    changes the string in the following way:
    $$\cdots\,v_j))u_q\,\cdots \quad\Rightarrow\quad\cdots\,
    )u_qv_j)\,\cdots$$ Actually, it only changes the number of
    inversions in white labels and black labels by 1.
    \item A counterclockwise movement of a white leaf $v_j$ does
    not change the distance between the root vertex $v_i$ and $v_j$  --- the
    movement at first increases this distance and then decreases
    it. This movement in essence is an interchange of positions of
    two neighboring white leaves and thus only change the number
    of inversions in white labels by one.
    \item A counterclockwise movement of a white leaf $v_j$ decreases
    the distance between the root vertex $v_i$ and $v_j$ by 2: at first
    the leaf is attached to
    the black vertex $u_p$, then it moves to the black vertex
    $u_q$, bypassing the white vertex $v_s$.
    \[\begin{picture}(180,70) \put(0,15){\circle{4}}
    \put(-3,5){\small $v_i$} \put(60,15){\circle*{3}}
    \put(60,55){\circle{4}} \put(100,15){\circle{4}}
    \put(140,15){\circle*{3}} \put(140,55){\circle{4}}
    \put(57,60){\small $v_j$}
    \multiput(25,15)(5,0){3}{\circle*{2}}
    \put(60,15){\line(1,0){38}} \put(60,15){\line(0,1){38}}
    \put(102,15){\line(1,0){38}} \put(140,15){\line(0,1){38}}
    \put(137,60){\small $v_j$} \put(57,5){\small $u_q$}
    \put(97,5){\small $v_s$} \put(137,5){\small $u_p$}
    \multiput(165,15)(5,0){3}{\circle*{2}}
    \qbezier[50](63,20)(80,20)(137,20)
    \put(95,23){$\leftarrow$} \end{picture}\] This movement
    changes the string in the following way:
    $$\cdots\,v_j)))\,\cdots \quad\Rightarrow\quad\cdots\,
    ))v_j)\,\cdots$$ It only changes the number of
    inversions in brackets and labels by 2, i.e. it changes the
    invariant by 1.
    \item Now let the root vertex $v_i$ be of degree one and it
    makes a movement
    \[\begin{picture}(300,140) \put(0,5){\line(1,0){40}}
    \put(0,5){\line(0,1){15}} \put(0,20){\line(1,0){40}}
    \put(40,5){\line(0,1){15}} \put(5,10){\small block A}
    \put(20,35){\circle*{3}} \put(20,35){\line(0,-1){15}}
    \put(8,35){\small $u_q$} \put(50,65){\circle{4}}
    \put(20,35){\line(1,1){29}} \put(38,65){\small $v_j$}
    \put(45,50){\line(1,0){40}} \put(45,50){\line(0,-1){15}}
    \put(45,35){\line(1,0){40}} \put(85,35){\line(0,1){15}}
    \put(50,40){\small block B} \put(51,64){\line(1,-1){14}}
    \put(80,95){\circle*{3}} \put(80,95){\line(-1,-1){29}}
    \put(68,95){\small $u_p$} \put(75,80){\line(1,0){40}}
    \put(75,80){\line(0,-1){15}} \put(75,65){\line(1,0){40}}
    \put(115,65){\line(0,1){15}} \put(80,70){\small block C}
    \put(80,95){\line(1,-1){15}} \put(80,125){\circle{4}}
    \put(80,95){\line(0,1){28}} \put(77,130){\small $v_i$}

    \put(130,50){$\Rightarrow$}

    \put(155,80){\line(1,0){40}}
    \put(155,80){\line(0,-1){15}} \put(155,65){\line(1,0){40}}
    \put(195,65){\line(0,1){15}} \put(160,70){\small block A}
    \put(175,80){\line(1,1){15}} \put(190,95){\circle*{3}}
    \put(190,95){\line(0,1){28}} \put(190,125){\circle{4}}
    \put(187,130){\small $v_i$} \put(190,95){\line(1,-1){29}}
    \put(220,65){\circle{4}} \put(178,95){\small $u_q$}
    \put(221,64){\line(1,-1){29}} \put(250,35){\circle*{3}}
    \put(225,65){\small $v_j$} \put(185,50){\line(1,0){40}}
    \put(185,50){\line(0,-1){15}} \put(185,35){\line(1,0){40}}
    \put(225,35){\line(0,1){15}} \put(190,40){\small block B}
    \put(205,50){\line(1,1){14}} \put(255,35){\small $u_p$}
    \put(250,35){\line(0,-1){15}} \put(230,20){\line(1,0){40}}
    \put(230,20){\line(0,-1){15}} \put(230,5){\line(1,0){40}}
    \put(270,5){\line(0,1){15}} \put(235,10){\small block C}
    \end{picture}\] The string is changed in the following
    way:
    $$v_iu_pv_ju_qA)B)C)) \Rightarrow v_iu_qAv_jBu_pC))))$$ Let us assume
    that
    \begin{itemize}
        \item block A contains $k_A$ white labels ($x$ of them
        precede $v_j$), $l_A$ black labels ($y$ of them precede
        $u_p$) and $k_A+l_A$ brackets;
        \item block B contains $k_B$ white labels, $l_B$ black labels
        ($z$ of them precede $u_p$) and $k_B+l_B$ brackets;
        \item block C contains $k_B$ white labels and $l_B$ black
        labels.
    \end{itemize} The movement of $u_p$ in the string changes the
    invariant in the following way:
    \begin{itemize}
        \item $u_pv_j\to v_j\ldots u_p\,$: $-1$;
        \item $u_p\ldots u_q\to u_q\ldots u_p\,$: $\pm 1$;
        \item $u_p\ldots A\to A\ldots u_p\,$: $-k_A+l_A-2y+(k_A+l_A)/2$;
        \item $u_p\ldots B\to B u_p\,$: $-k_B+l_B-2z+(k_B+l_B)/2$.
    \end{itemize} $-k_A/2+3l_A/2-k_B/2+3l_B/2-2y-2z-1 \pm 1$ in total. \pmn The movement of $v_j$
    in the string changes the invariant in the following way:
    \begin{itemize}
        \item $v_ju_q\to u_q\ldots v_j\,$: $1$;
        \item $v_j\ldots A\to A v_j\,$: $k_A-2x+l_A+(k_A+l_A)/2$.
    \end{itemize} $3k_A/2+3l_A/2+1-2x$ in total. \pmn
    Movement of brackets makes two bypasses of $C$ and one bypass
    of $B$, thus this movement changes the invariant by
    $-(k_B+l_B)/2-k_C-l_C$. Thus, the total change is
    \begin{align*}
    k_A+3l_A-k_B+l_B-&k_C-l_C-2x-2y-2z\pm 1\equiv \\ &\equiv
    k_A+l_A+k_B+l_B+k_C+l_C+1 \text{ mod 2}.\end{align*} It remains to note
    that $k_A+l_A+k_B+l_B+k_C+l_C+1=n+m-3$ --- odd number.
    \item All other cases are obvious. The analysis of the black
    leaf movement is the same as the analysis of white leaf
    movement.
\end{itemize}
\end{proof}

\begin{rem} In \cite{K1} and \cite{K2} it was proved that the set of
plane bipartite weighted trees with six vertices and the given
lists of white and black weights in generic case is a union of two
subsets. An analytically defined invariant $I(T)=\pm\sqrt d$,
where $d$ is the product of weights of vertices, determines the
belonging of a tree $T$ to this or that subset. In generic case a
weighted tree is a labelled tree with constraints. The study of
geometrical properties of invariant $I(T)$ is the origin of this
work.\end{rem}

\vspace{5mm}

\begin{thebibliography}{99}

\bibitem{GJ} Goulden I.P. and Jackson D.M., The combinatorial relationship between
trees, cacti and certain connection coefficients for the symmetric
group, European J. Combin., 1992, V.13, p. 357-365.
\bibitem{K1} Kochetkov Yu., Anti-Vandermonde systems and plane
trees, Fuct. Anal. Appl., 36:3 (2002), 240-243.
\bibitem{K2} Kochetkov Yu., Enumeration of one class of plane
weighted trees, J. Math. Sci., 209:2 (2015), 282-291.
\end{thebibliography}
\end{document}